\documentclass[12pt]{amsart}
\usepackage[all]{xy}
\usepackage{pgf}
\usepackage{tikz}
\usepackage{tikz-cd}
\usepackage[margin=1in]{geometry}  
\usepackage{graphicx}              
\usepackage{amsmath}               
\usepackage{amsfonts}              
\usepackage{amsthm}                
\usepackage{euscript}

\theoremstyle{definition}
\newtheorem{thm}{Theorem}[section]

\newtheorem{prop}[thm]{Proposition}
\newtheorem{cor}[thm]{Corollary}

\newtheorem{rem}[thm]{Remark}
\newtheorem{defn}{Definition}[section]
\newtheorem{exa}[thm]{Example}
\newtheorem{question}[thm]{Question}

\DeclareMathOperator{\op}{op}
\DeclareMathOperator{\Sym}{Sym}
\DeclareMathOperator{\pt}{pt}
\DeclareMathOperator{\tst}{tst}
\DeclareMathOperator{\st}{st}

\DeclareMathOperator{\GL}{GL}

\DeclareMathOperator{\id}{id}

\DeclareMathOperator{\codim}{codim}
\DeclareMathOperator{\Hom}{Hom}
\DeclareMathOperator{\Spec}{Spec}

\newcommand{\RR}{\mathbb{R}}      
\newcommand{\ZZ}{\mathbb{Z}}      
\newcommand{\QQ}{\mathbb{Q}}

\newcommand{\tbf}{\textbf}

\newcommand{\A}{\mathbb{A}}

\newcommand{\X}{{\mathcal X}}

\newcommand{\Z}{{\mathcal Z}}

\newcommand{\cat}{\EuScript}    

\newcommand{\Sch}{{\cat Sch}}

\newcommand{\Ab}{{\cat Ab}}

\tikzset{node distance=2cm, auto}

\title{The cone theorem and the vanishing of Chow cohomology}

\author{Dan Edidin}
\author{Ryan Richey}
\address{Department of Mathematics, University of Missouri, Columbia MO 65211}
\email{edidind@missouri.edu}
\email{rrtf3@mail.missouri.edu}

\thanks{The first author was supported by Simons Collaboration Grant
  315460.}


\subjclass[2010]{14C15, 14F43, 14M25}

\begin{document}

\begin{abstract}
  We show that a cone theorem for $\A^1$-homotopy invariant
  contravariant functors implies the vanishing of
  the positive degree part of the operational Chow cohomology rings
  of a large class of affine varieties. We also discuss how this vanishing
  relates to a number of questions about representing Chow cohomology classes
  of GIT quotients in terms of equivariant cycles. 
  \end{abstract}

\maketitle

\section{Introduction}
In \cite{FuMa:81,Ful:84}, Fulton and MacPherson define for any scheme $X$ a
graded cohomology ring $A^*_{\op}(X)$ which equals the classical
intersection ring when $X$ is non-singular. An element of
$A^*_{\op}(X)$ is a collection of operations on Chow groups of
$X$-schemes compatible with basic operations in intersection theory.
The product structure is given by composition. By construction there
is a pullback of operational rings $A^*_{\op}(Y) \to A^*_{\op}(X)$
for any morphism $X \to Y$. 

For an arbitrary singular scheme elements of $A^*_{\op}(X)$ do not
have natural interpretations in terms of algebraic cycles,
although Kimura \cite{Kim:92} showed how $A^*_{\op}(X)$ can be related to the
intersection ring of a resolution of singularities of $X$. 
Despite the formal structure, there is a class of singular varieties
where the operational Chow groups are readily computable. Specifically, if $X$
is a complete linear variety then Totaro \cite{Tot:14} proved that the
pairing
$$A^k_{\op}(X) \times A_k(X) \to A_0(X) = \ZZ\; , (c,\alpha)\mapsto c \cap \alpha$$
is perfect so $A^k_{\op}(X) = \Hom(A_k(X), \ZZ)$, where $A_k(X)$ denotes the classical Chow group of $k$-dimensional cycles. 
When $X$ is a complete toric variety Fulton and Sturmfels
\cite{FuSt:97} gave an explicit description of the operational
product in terms of Minkowski weights. This was generalized by Payne 
\cite{Pay:06} who showed that the $T$-equivariant operational Chow ring of
a toric variety $X$
can be identified with the ring of integral piecewise polynomial functions on the fan
of $X$. 

The Cox construction expresses any toric variety as a good quotient
$(\A^n \setminus B)/G$ where $G$ is a diagonalizable group. In
\cite{EdSa:17} the first author and Satriano showed that if $X = Z/G$
is the good quotient of a smooth variety by a linearly reductive group
then the rational operational Chow ring $A^*_{\op}(X)_\QQ$ naturally
embeds in the equivariant Chow ring $A^*_G(Z)_\QQ$. Moreover, the image
of an element in $A^k_{\op}(X)$ is represented by a class of the
form $\sum c_i [Z_i]$ where the $Z_i \subset Z$ are codimension-$k$
$G$-invariant subvarieties of $Z$ which are saturated with respect to
the quotient map $\pi \colon Z \to X$.

By the \'etale slice theorem \cite{Lun:73} the local model for the
good quotient of a smooth variety at a closed orbit $Gx \subset Z$ is
the quotient $V \to V/G_x$ where $V = T_{x,Z}/T_{x,Gx}$ is the normal
space to the orbit $Gx$ at $x$, and $G_x$ is the stabilizer of
$x$. Therefore, a natural problem is to compute the operational Chow
rings of good quotients $V/G$ where $V$ is a representation of a
linearly reductive group $G$.  Examples of such quotients are affine
toric varieties associated to maximal dimensional
strongly convex polyhedral cones.  In
\cite{Ric:19, EdRi:19} it is shown that $A^*(X) = \ZZ$ for any affine
toric variety $X$, and likewise that $\op K^0(X) =\ZZ$ where $\op K^0$
is the operational $K$-theory defined by Anderson and Payne
\cite{AnPa:15}.

The purpose of this paper is to show that stronger results hold.  The
contravariant functors $A^*_{\op}$ and $\op K^0$ are both
$\A^1$-homotopy invariants.  The cone theorem (Theorem
\ref{thm.cone}) implies the vanishing of such functors on a large
class of naturally occurring varieties including affine toric varieties
and quotients of representations of reductive groups.

Since quotients of the form $V/G$ are also the local models for good moduli
spaces of Artin stacks \cite{AHR:15} we conclude with a discussion of how the
vanishing of $A^*_{\op}(V/G)$ relates to questions about the image of
$A^*_{\op}(X)$ in $A^*\X$ when $X$ is the good moduli space of a
smooth algberaic stack $\X$.

{\bf Dedication.} It is a pleasure to dedicate this work to William Fulton on the occasion of his 80th birthday. 

\section{Homotopy invariant functors}
Fix a ground field $k$ and let $\Sch/k$ denote the category of $k$-schemes
of finite type.
\begin{defn}
  A homotopy invariant functor is  a contravariant functor $H \colon
  \Sch/k \to \Ab$ such that the pullback $H(X) \stackrel{\pi^*} \to H(X \times
  \A^1)$ is an isomorphism for all $X$ in $\Sch/k$.
  Likewise if $G$ is an algebraic group then a $G$-homotopy invariant
  functor is a contravariant functor $H^G \colon \Sch^G/k \to \Ab$ such
  that for any $G$-scheme $X$  the pullback $H^G(X)  \stackrel{\pi^*} \to H^G(X \times
  \A^1)$ is an isomorphism, where the action of $G$ on $\A^1$ is trivial.
  \end{defn}

In this paper we focus on several homotopy invariant functors -- the
operational Chow cohomology ring defined in \cite{FuMa:81} and
\cite[Chapter 17]{Ful:84} as well as its equivariant counterpart
defined in \cite{EdGr:98} and the (equivariant) operational
$K$-theory defined by Anderson and Payne in \cite{AnPa:15}.

  \subsection{Chow cohomology}
Let $X$ be a scheme.  Following \cite{Ful:84}, let $A_k(X)$ denote the group of dimension
$k$ cycle classes modulo rational equivalence, and if $X$ is
equidimensional of dimension $n$ we let $A^k(X)$ denote the group of $(n-k)$-dimensional 
cycle classes modulo rational equivalence.  If $X$ is
smooth and equidimensional, then the intersection product on $A_k(X)$ as constructed in
\cite[Chapter 6.1]{Ful:84} makes $A^*(X)$ into a commutative, graded
ring.

For general schemes  \cite[Chapter 17]{Ful:84} defines a graded operational
Chow cohomology ring
$A^*_{\op}(X) := \oplus_{k \geq 0} A^k_{\op}(X)$: an
element $c\in A^k_{\op}(X)$ is a collection of homomorphisms of groups:
\[ c^{(k)}_g: A_pX'  \rightarrow A_{p-k}X'\]
for every morphism $g:X'\rightarrow X$  which are
compatible with respect to proper pushforward, and pullbacks
along flat morphisms and regular embeddings (see \cite[Definitions 17.1 and 17.3]{Ful:84}).  The
product is given by composition and turns $A^*_{\op}(X)$ into
a graded ring called the $\tbf{Chow
  cohomology ring}$ of $X$. Moreover, if
$X$ has a resolution of singularities (e.g. if the characteristic of
the ground field is zero or if $X$ is a toric variety) then
$A^*_{\op}(X)$ is known to be commutative.
If $X$ is smooth, then \cite[Corollary
  17.4]{Ful:84} proves that the Poincar\'e duality map $A^k_{\op}(X)
\rightarrow A^k(X) = A_{n-k}(X)$ is an isomorphism of rings where 
the intersection product agrees with the product given by composition.

\begin{rem}
In \cite[Chapter 17]{Ful:84} the Chow cohomology
ring is also denoted $A^*(X)$ without the inclusion of the subscript
'op'.
\end{rem}

\begin{prop}
  Chow cohomology is an $\A^1$-homotopy invariant functor.
\end{prop}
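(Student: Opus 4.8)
The plan is to show that for every $X$ the pullback $\pi^* \colon A^*_{\op}(X) \to A^*_{\op}(X \times \A^1)$ induced by the projection $\pi \colon X \times \A^1 \to X$ is an isomorphism in each degree. Since pullback of operational classes is contravariantly functorial, the zero section $s_0 \colon X \to X \times \A^1$ satisfies $s_0^* \circ \pi^* = (\pi \circ s_0)^* = \id$, so $\pi^*$ is a split injection. The content of the proposition is therefore the surjectivity of $\pi^*$, which I would obtain by proving that the composite $\pi^* \circ s_0^*$ is the identity on $A^*_{\op}(X \times \A^1)$.

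Because an operational class is determined by the homomorphisms it assigns to each morphism into its base, it suffices to fix $d \in A^k_{\op}(X \times \A^1)$, a morphism $g \colon Y \to X \times \A^1$, and a class $\alpha \in A_p(Y)$, and to prove $(\pi^* s_0^* d)_g(\alpha) = d_g(\alpha)$. Unwinding the definition of pullback gives $(\pi^* s_0^* d)_g = d_{s_0 \circ \pi \circ g}$, where $s_0 \circ \pi \circ g \colon Y \to X \times \A^1$ is the composite of $g$ with the retraction of $X \times \A^1$ onto its zero section. So the claim reduces to the equality $d_g = d_{s_0 \circ \pi \circ g}$ of operators on $A_*(Y)$; that is, $d$ must not distinguish $g$ from its composite with this retraction.

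To prove this I would write $g = (g_1, g_2)$ with $g_1 \colon Y \to X$ and $g_2 \colon Y \to \A^1$, and introduce the homotopy $H \colon Y \times \A^1 \to X \times \A^1$, $H(y,t) = (g_1(y), t\,g_2(y))$, together with the fiber inclusions $i_0, i_1 \colon Y \to Y \times \A^1$ and the projection $\rho \colon Y \times \A^1 \to Y$. Then $g = H \circ i_1$ and $s_0 \circ \pi \circ g = H \circ i_0$, and each $i_t$ is a regular (Cartier divisor) embedding of $X \times \A^1$-schemes. Fulton's homotopy property \cite[Theorem 3.3]{Ful:84} gives that $\rho^* \colon A_p(Y) \to A_{p+1}(Y \times \A^1)$ is an isomorphism; setting $\beta = \rho^* \alpha$ and using $\rho \circ i_t = \id_Y$ yields $i_t^* \beta = \alpha$ for both $t$. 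The compatibility of $d$ with pullback along regular embeddings \cite[Definition 17.1]{Ful:84} then gives $d_{H \circ i_t}(\alpha) = d_{H \circ i_t}(i_t^* \beta) = i_t^*(d_H\, \beta)$ for $t = 0,1$. Finally, the Cartier divisors $Y \times \{0\}$ and $Y \times \{1\}$ are linearly equivalent, being pullbacks of the linearly equivalent points $0,1 \in \A^1$, so the associated Gysin maps coincide, $i_0^* = i_1^*$ on $A_*(Y \times \A^1)$, and therefore $d_{s_0 \circ \pi \circ g}(\alpha) = i_0^*(d_H\, \beta) = i_1^*(d_H\, \beta) = d_g(\alpha)$, as required.

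The whole argument is a lift of the homotopy invariance of Chow groups to the operational level, and the step I expect to require the most care is the bookkeeping in the previous paragraph: verifying that the compatibility axiom applies to a genuine fiber square of $X \times \A^1$-schemes with $i_t$ a regular embedding, that $i_t^* \rho^* = \id$ really holds for the composite of a flat pullback with a divisorial Gysin map, and that $i_0^*$ and $i_1^*$ agree on all of $A_*(Y \times \A^1)$ rather than merely on classes pulled back from $Y$. The remaining assertions, that $\pi^*$ is an isomorphism of graded rings, are then formal, since both $\pi^*$ and $s_0^*$ are pullbacks and hence graded ring homomorphisms.
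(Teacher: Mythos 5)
Your proof is correct, but it takes a genuinely different route from the paper's. The paper argues injectivity formally (as you do) and then constructs an explicit preimage: given $c \in A^k_{\op}(X \times \A^1)$ it defines $d \in A^k_{\op}(X)$ by $d_f(\alpha) = (\rho^*)^{-1} c_{f \times \id}(\rho^*\alpha)$, where $\rho \colon Y \times \A^1 \to Y$ is the projection, using Fulton's homotopy invariance of Chow homology to invert $\rho^*$, and then asserts $\pi^* d = c$ (leaving implicit both the verification of the operational axioms for $d$ and the identity $\pi^* d = c$). You instead prove that $\pi^* \circ s_0^*$ is the identity on $A^*_{\op}(X \times \A^1)$, reducing everything to the single equality $d_g = d_{s_0 \circ \pi \circ g}$, which you obtain from the scaling homotopy $H(y,t) = (g_1(y), t\,g_2(y))$ together with $i_0^* = i_1^*$; this is essentially the contraction argument the paper uses later to prove the cone theorem (Theorem \ref{thm.cone}), specialized to the cone $X \times \A^1$ over $X$. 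Both arguments rest on the same two inputs — homotopy invariance of $A_*$ and the Gysin-compatibility axiom of \cite[Definition 17.1]{Ful:84} — but your version has the merit that well-definedness is automatic, since $s_0^* d$ is operational by pure functoriality, whereas the paper's candidate $d$ requires checking the compatibility axioms; the paper's version is more economical on the page. One local remark: your appeal to linear equivalence of the divisors $Y \times \{0\}$ and $Y \times \{1\}$ to conclude $i_0^* = i_1^*$ is both slightly imprecise (the refined Gysin classes a priori live on the different supports $Y \times \{0\}$ and $Y \times \{1\}$, so the maps only agree after the canonical identifications with $A_*(Y)$) and unnecessary: since $\rho^*$ is surjective by homotopy invariance and $i_t^* \rho^* = \id$ for each $t$, both $i_0^*$ and $i_1^*$ equal $(\rho^*)^{-1}$, so the concern you flag about classes not pulled back from $Y$ is vacuous — every class on $Y \times \A^1$ is pulled back.
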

\begin{proof}
  Consider the pullback $\pi^* \colon A^*_{\op}(X) \to A^*_{\op}(X \times \A^1)$ where $\pi \colon X \times \A^1 \to X$ is the projection.
First note that injectivity of $\pi^*$  is a formal consequence of the functoriality; the composition $X \stackrel{\iota} \to X \times \A^1 \stackrel{\pi} \to X$
  is the identity, where $\iota(x) = (x,0)$.
  
  Suppose that $c \in A^k_{\op}(X \times \A^1)$. We wish to
  show that $c = \pi^* d$ for some $d \in A^k_{\op}(X)$.
  Given a morphism $Y \stackrel{f} \to X$ let
  $g = f \times \id \colon Y \times \A^1 \to X \times \A^1$.
  Since the flat pullback $\pi^* \colon A_*(Y) \to A_*(Y \times \A^1)$
  is an isomorphism and this isomorphism is compatible with other operations
  on Chow homology we can define a class $d \in A^*_{\op}(X)$ such
  that $\pi^*d = c$ by the formula
  $d_f(\alpha) = (\pi^*)^{-1}c_g(\pi^*\alpha)$.
\end{proof}
\subsubsection{Equivariant Chow cohomology}
An equivariant version of operational Chow cohomology was defined in
\cite{EdGr:98}. An element $c \in A^k_{\op, G}(X)$ is a collection operations on equivariant Chow groups $c_f \colon
A_*^G(X') \to A_{*-k}^GG(X')$ for every equivariant morphism $X'
\stackrel{f} \to X$ compatible with equivariant proper pushforward and
equivariant flat maps and equivariant regular embeddings maps. \cite[Corollary 2]{EdGr:98} states
that if $X$ admits a resolution of singularities then
$A^k_{\op,G}(X)$ can be identified with the operational Chow
group $A^k_{\op}(X_G)$ where $X_G$ is an algebraic space of the
form $X \times^G U$. Here $U$ is an open set in a representation $V$ of
$G$ on which $G$ acts freely and $\codim (V \setminus U) >
k$.  It follows from this identification that the equivariant Chow
cohomology groups $A^k_{{\op},G}(X)$ enjoy all of the formal
properties of ordinary operational Chow cohomology. In particular, the
functor $A^*_{{\op},G}$ is a homotopy invariant functor on the
  category of schemes or algebraic spaces with a $G$-action.

\subsection{Operational $K$-theory}
Following \cite{AnPa:15}, if $X$ is a scheme 
we denote by $K_0(X)$ the Grothendieck group of coherent sheaves, and
$K^0(X)$ the Grothendieck group of perfect complexes. If $X$ has an ample
family of line bundles, then $K^0(X)$ is the same as the naive Grothendieck
group of vector bundles.

For any scheme $X$, Anderson and Payne define the {\bf operational
  $K$-theory} $\op K^0(X)$ of $X$ as follows.  An element $c \in \op
K^0(X)$ is a collection of operators $c_f \colon K_0(X') \to K_0(X')$
indexed by morphisms $X' \stackrel{f} \to X$ compatible with proper
pushforward, flat pullback and pullback along regular embeddings.

For any scheme $X$, there is a canonical map $\op K^0(X) \to K_0(X)$ given
by $c \mapsto c_{\id_X}({\mathcal O}_X)$. If $X$ is smooth, then
\cite[Corollary 4.5]{AnPa:15} states that this map is an isomorphism.

\begin{thm}\cite[Theorem 1.1] {AnPa:15}
  $\op K^0$ is a homotopy invariant functor.
\end{thm}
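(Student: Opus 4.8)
The plan is to run the argument just given for Chow cohomology essentially verbatim, with the $K$-theory of coherent sheaves $K_0$ playing the role of Chow homology $A_*$. That is, I would show that for every $X$ in $\Sch/k$ the pullback $\pi^* \colon \op K^0(X) \to \op K^0(X \times \A^1)$ induced by the projection $\pi \colon X \times \A^1 \to X$ is an isomorphism.

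Injectivity is again a formal consequence of functoriality: writing $\iota \colon X \to X \times \A^1$ for the section $\iota(x) = (x,0)$, one has $\pi \circ \iota = \id_X$, hence $\iota^* \circ \pi^* = \id$ on $\op K^0(X)$ and $\pi^*$ is injective. The key analytic input for surjectivity is the homotopy invariance of the $K$-theory of coherent sheaves: for every scheme $Y$ the flat pullback $p^* \colon K_0(Y) \to K_0(Y \times \A^1)$ along the projection $p \colon Y \times \A^1 \to Y$ is an isomorphism. This is the degree-zero case of Quillen's homotopy theorem for $G$-theory, and it is exactly the analogue of the homotopy invariance of Chow homology used in the preceding proposition.

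Given $c \in \op K^0(X \times \A^1)$ I would define a candidate preimage $d$ by the same recipe as in the Chow case: for a morphism $f \colon Y \to X$ put $g = f \times \id \colon Y \times \A^1 \to X \times \A^1$ and set
\[ d_f(\alpha) = (p^*)^{-1}\, c_g(p^* \alpha), \qquad \alpha \in K_0(Y). \]
To see that $\pi^* d = c$, I would factor an arbitrary morphism $h \colon W \to X \times \A^1$ as $h = g \circ s$, where $g = (\pi h) \times \id$ and $s \colon W \to W \times \A^1$ is the section given by the graph of the $\A^1$-component of $h$. Since $s$ is a regular embedding with $p \circ s = \id_W$, the Gysin map satisfies $s^! \circ p^* = \id$, whence $s^! = (p^*)^{-1}$; feeding $\beta = p^*\alpha$ into the compatibility of $c$ with pullback along $s$ then yields $c_h(\alpha) = s^!\, c_g(p^*\alpha) = (p^*)^{-1} c_g(p^*\alpha) = (\pi^* d)_h(\alpha)$.

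The genuine work — and the step I expect to be the main obstacle — is checking that the collection $\{d_f\}$ actually satisfies the operational axioms, i.e. that it commutes with proper pushforward, flat pullback, and pullback along regular embeddings, so that it truly defines an element of $\op K^0(X)$. Each axiom reduces to conjugating the corresponding axiom for $c$ by the isomorphisms $p^*$ and $(p^*)^{-1}$, which is legitimate precisely because the homotopy isomorphism for $G$-theory is itself compatible with all three operations (base change for proper maps, transitivity for flat maps, and commutation of Gysin maps for regular embeddings). Granting these standard compatibilities the verification is routine diagram-chasing; the only nontrivial external ingredient is the homotopy invariance of the $K$-theory of coherent sheaves, mirroring the role played by homotopy invariance of Chow homology above.
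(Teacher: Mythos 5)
Your proposal is correct and is essentially the argument intended here: the paper itself only cites Anderson--Payne for this theorem, but its own proof of the analogous proposition for Chow cohomology is exactly the formal argument you run, with Quillen's homotopy invariance of $K_0$ of coherent sheaves replacing homotopy invariance of Chow homology (indeed the paper later remarks that this proof ``goes through'' in the equivariant case for precisely this reason). Your extra care in verifying $\pi^* d = c$ via the graph section $s$ and $s^! = (p^*)^{-1}$ fills in a step the paper's Chow proof leaves implicit, and the compatibilities you flag are the standard ones.
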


\subsubsection{Equivariant operational $K$-theory}
Anderson and Payne also define the equivariant operational $K$-theory
ring as the ring of operations on the equivariant Grothendieck group of
coherent sheaves, $K_0^G(X)$.  Since the equivariant Grothendieck
group is an $\A^1$-homotopy invariant the proof of \cite[Theorem 1.1
]{AnPa:15} goes through and we conclude that $\op K^0_G$ is a homotopy
invariant functor. If $G = T$ is a torus and $X$ is smooth,
then Anderson and Payne also
prove that $\op K^0_T(X)$ can be identified with $K_0^T(X)$.

\section{The cone theorem}

Fix a base scheme $X$ of finite type defined over a field $k$.
\begin{defn}
An $X$-cone is a scheme of the form $C = {\Spec}_X S$
where
$S = \oplus_{n=0}^\infty S_i$ is a finitely generated 
graded ${\mathcal O}_X$-algebra such that $S_0 = {\mathcal O}_X$.
(Note that we do not require that $S$ be locally generated in degree one.)

More generally, if $G$ is an algebraic group and $X$ is a $G$-scheme then we say that $C = \Spec S$ is a $G$-cone
if the $S_i$ are sheaves of $G$-${\mathcal O}_X$ modules and multiplication
of local sections is $G$-equivariant.
\end{defn}

The inclusion $S_0 \to S$
defines a projection $\rho \colon C \to X$ and the identification
of $S_0 = S/S^+$ defines an inclusion $\iota \colon X \to C$. Clearly,
$\rho \circ \iota = \id_X$.

The key property of homotopy invariant functors is the following
cone theorem.
\begin{thm}{\cite[cf. Exercise IV.11.5]{Wei:13}} \label{thm.cone}
  The pullbacks $\rho^*$ and $\iota^*$ are inverses. In particular
  $H(X) = H(C)$. Likewise if $H^G$ is an $G$-homotopy invariant
  functor and $C =\Spec S$ is a $G$-cone then $H^G(X) = H^G(Y)$. 
\end{thm}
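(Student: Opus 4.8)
The plan is to exploit the scaling action that every cone carries and to reduce the statement to homotopy invariance by a standard section argument. Since $\rho \circ \iota = \id_X$ is given, contravariance of $H$ immediately yields $\iota^* \circ \rho^* = \id_{H(X)}$, so one of the two compositions is already the identity. It therefore suffices to prove $\rho^* \circ \iota^* = \id_{H(C)}$, and for this I would produce an explicit morphism $\mu \colon C \times \A^1 \to C$ that contracts $C$ onto its vertex $\iota(X)$.

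To construct $\mu$ I would use the grading on $S$. Writing $C \times \A^1 = \Spec_X S[t]$, where $S[t] = S \otimes_{\mathcal{O}_X} \mathcal{O}_X[t]$, define the $\mathcal{O}_X$-algebra homomorphism $\mu^\# \colon S \to S[t]$ by $\mu^\#(s) = s\, t^i$ for homogeneous $s \in S_i$, extended additively. This is well defined and multiplicative because $S = \bigoplus_{i \geq 0} S_i$ is graded with $S_0 = \mathcal{O}_X$, so it corresponds to a morphism $\mu$ of finite-type $k$-schemes. Letting $i_0, i_1 \colon C \to C \times \A^1$ denote the sections of the projection $\pi$ at $t = 0$ and $t = 1$, I would verify on $\mu^\#$ that $\mu \circ i_1 = \id_C$ (substituting $t = 1$ returns the identity map of $S$) and $\mu \circ i_0 = \iota \circ \rho$ (substituting $t = 0$ annihilates $S^+$ and fixes $S_0$, which is exactly the ring map defining $\iota \circ \rho$).

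With $\mu$ in hand the argument is purely formal. Homotopy invariance says $\pi^*$ is an isomorphism, and since $\pi \circ i_0 = \id_C = \pi \circ i_1$ we get $i_0^* \circ \pi^* = \id = i_1^* \circ \pi^*$, whence $i_0^* = i_1^* = (\pi^*)^{-1}$. Applying $H$ to the two endpoint identities then gives
\[
\rho^* \circ \iota^* = (\iota \circ \rho)^* = (\mu \circ i_0)^* = i_0^* \circ \mu^* = i_1^* \circ \mu^* = (\mu \circ i_1)^* = \id_{H(C)}.
\]
Combined with $\iota^* \circ \rho^* = \id_{H(X)}$, this shows $\rho^*$ and $\iota^*$ are mutually inverse, so $H(X) = H(C)$.

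For the equivariant statement I would observe that when $C$ is a $G$-cone the grading is $G$-stable, so the same formula $\mu^\#(s) = s\, t^i$ defines a $G$-equivariant morphism with $G$ acting trivially on $\A^1$; replacing $H$, $\pi^*$, and homotopy invariance by their $G$-equivariant analogues, the identical chain of equalities yields $H^G(X) = H^G(C)$. The only genuinely non-formal step is the construction of the scaling homotopy $\mu$ and the verification of its two endpoints; I expect this to be the main point to get right, since everything downstream is a formal manipulation of sections of a projection together with the defining isomorphism of a homotopy invariant functor.
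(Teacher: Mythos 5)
Your proposal is correct and follows essentially the same route as the paper: both construct the scaling homotopy $\mu^\#(s) = t^i s$ on $C \times \A^1$ and exploit the sections $i_t$ of the projection, with $\mu \circ i_0 = \iota \circ \rho$. The only cosmetic difference is that you conclude $\rho^* \circ \iota^* = \id_{H(C)}$ directly from $i_0^* = i_1^* = (\pi^*)^{-1}$ and $\mu \circ i_1 = \id_C$, whereas the paper argues slightly more indirectly that $\sigma^*$, and hence $s_0^* = (\iota \circ \rho)^*$, is an isomorphism.
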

\begin{proof}
  We give the proof in the non-equivariant case as the proof in
  the equivariant case is identical.
  
  Since $\rho \circ \iota = \id_X$, we know that $\iota^* \circ \rho^* \colon X \to X$ is the identity. In particular, $\rho^*$ is injective. Thus
  it suffices to prove that $\rho^* \circ \iota^* \colon C \to C$ is an
  isomorphism.
  
  Since $C$ is a cone over $X = \Spec S_0$,
  the map of graded rings  $S \to S[t]$, sending $S_i$ to $t^i S_i$.
defines an $\A^1$ action
  $\sigma \colon C \times \A^1 \to C$
with fixed scheme $X= \Spec S_0$.

Let $s_t \colon C \to C$ be the map $x \mapsto tx$. For $t \neq 0$,
$s_t$ is an isomorphism with inverse $s_{t^{-1}}$ and $s_0 \colon
C \to C$ is the composition $\iota \circ \rho$. The map
$s_t$ is itself a composition
$$C \stackrel{i_t} \to C \times \A^1 \stackrel{\sigma} \to C$$
where $i_t \colon X \hookrightarrow X \times \A^1$ is the inclusion
$x \mapsto (x,t)$.

Now if $\pi \colon X \times \A^1 \to X$ is the projection,
then for any $t$, $\pi \circ i_t = \id$. Since $\pi^*$ is assumed to be an isomorphism, $i_t^*$ must also be an isomorphism for any $t$.  Since, for
$t \neq 0$ the composite $s_t = \sigma \circ i_t$ is an isomorphism
we see that $\sigma^*$ must also be an isomorphism. Hence,
$s_0^* = (\sigma \circ i_0)^*$ is an isomorphism.
But $s_0= (\iota \circ \rho)$, so $(\iota \circ \rho)^*$ is an
isomorphism as claimed.
\end{proof}

\begin{exa}
  Let $X \subset Y$ be a closed subscheme and let $C_XY$ be the normal
  cone of $X$ in $Y$. Theorem \ref{thm.cone} implies that the pullback
  $A^*_{\op}(X) \to A^*_{\op}(C_XY)$ is an isomorphism. In particular if $X$ is
  smooth then $A^*_{\op}(C_XY)$ is identified with the Chow ring of $X$. If
  the closed embedding $X \hookrightarrow Y$  is a regular embedding (for example if
  $X$ and $Y$ are both smooth) then $C_XY$ is the
  normal bundle to $X$ in $Y$ and this identification follows from the usual
  homotopy invariance of operational Chow rings.
\end{exa}
\subsection{Cone theorem for bivariant groups}
The operational Chow and $K$-theory rings defined by Fulton--MacPherson
and Anderson--Payne are part of a more general  construction
associated to the covariant (for proper morphisms) functors $K_0$ and
$A_*$. Given a morphism of schemes $Y \to X$ the bivariant
Chow group $A^k_{\op}(Y \to X)$ is the graded abelian group consisting
of a collection of operators $c_f \colon A_*(Y') \to A_{* -k}*(X')$
for each morphism $X' \stackrel{f} \to X$ compatible with
proper pushforward and flat pullbacks and pullback and pullback along regular
embeddings, where $Y' = Y \times_X X'$.
The group $\op K^0(Y \to X)$ is defined analogously but the compatibility
is with proper pushforward, flat pullback and pullback along regular embeddings.

The groups $A^*_{\op}(Y \to X)$ and $\op K^0(Y \to X)$ are contravariant functors
on the category whose objects are morphisms of schemes $Y \to X$
and whose morphisms are cartesian diagrams
$\begin{array}{ccc}
  Y' & \to & X'\\
 \downarrow & & \downarrow\\ Y & \to  & X
\end{array}
$\\
It is easy to show that pullback along the diagram
$\begin{array}{ccc}
  Y \times \A^1  & \to & X \times \A^1\\ \downarrow & & \downarrow\\
  Y & \to & X
\end{array}
$\\
induces isomorphisms $A^*_{\op}(Y \to X) \to
A^*_{\op}(Y \times \A^1 \to X \times \A^1)$,\\ $\op K^0(Y \to X) \to
\op K^0(Y \times \A^1 \to X \times \A^1)$.
As a corollary we obtain a cone isomorphism theorem for these bivariant
groups.
\begin{cor}
  Given a morphism, $Y \to X$ and cone $C \to X$ let $C_Y \to Y$
  be the cone obtained by base change. Then the pullbacks
  $A^*_{\op}(Y \to X) \to A^*_{\op}(C_Y \to C)$
  and $\op K^0(Y \to X) \to \op K^0(C_Y \to C)$ are isomorphisms.
  \end{cor}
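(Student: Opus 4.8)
The plan is to run the proof of Theorem~\ref{thm.cone} essentially verbatim, but now inside the bivariant category, using the homotopy invariance of the bivariant groups recorded just above in place of the homotopy invariance of $H$. The only thing requiring care is that bivariant functoriality is available only along cartesian squares, so every scheme map appearing in the proof of Theorem~\ref{thm.cone} --- the projection $\rho$, the vertex inclusion $\iota$, the scaling action $\sigma$, the sections $i_t$, the scalings $s_t$, and the $\A^1$-projection $\pi$ --- must first be upgraded to a morphism in the bivariant category, i.e. realized as one leg of a cartesian square lying over the corresponding datum on the base.

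First I would record the two squares giving the maps to be compared. Write $\rho_C\colon C\to X$ and $\iota_C\colon X\to C$ for the cone projection and vertex inclusion, and $\rho_{C_Y},\iota_{C_Y}$ for their base changes along $f\colon Y\to X$. The identity $C_Y=C\times_X Y$ exhibits
$$\begin{array}{ccc} C_Y & \to & C\\ \rho_{C_Y}\downarrow & & \downarrow\rho_C\\ Y & \xrightarrow{\,f\,} & X \end{array}$$
as cartesian, and its pullback is exactly the map $\rho^*\colon A^*_{\op}(Y\to X)\to A^*_{\op}(C_Y\to C)$ of the statement; dually, $Y=C_Y\times_C X$ gives a cartesian square whose pullback is a map $\iota^*$ in the opposite direction. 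Stacking these two squares vertically produces a square whose vertical legs are $\rho_C\iota_C=\id_X$ and $\rho_{C_Y}\iota_{C_Y}=\id_Y$, so by functoriality $\iota^*\circ\rho^*=\id$ and $\rho^*$ is split injective. It remains to show that the opposite composite $\rho^*\circ\iota^*$, which is the pullback along the square with vertical legs the zero-scalings $s_0^C=\iota_C\rho_C$ and $s_0^{C_Y}=\iota_{C_Y}\rho_{C_Y}$, is an isomorphism.

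For this I would import the scaling data from the base. The $\A^1$-action $\sigma_C\colon C\times\A^1\to C$ of Theorem~\ref{thm.cone} is a morphism of $X$-schemes, so base changing along $f$ yields $\sigma_Y\colon C_Y\times\A^1\to C_Y$, and the square with horizontal legs the projections $C_Y\times\A^1\to C\times\A^1$, $C_Y\to C$ and vertical legs $\sigma_Y,\sigma_C$ is cartesian, being $(-)\times_X Y$ applied to the $X$-morphism $\sigma_C$. Together with the evidently cartesian squares built from the sections $i_t$ and the projection $\pi$, this produces bivariant pullbacks $\sigma^*$, $(i_t)^*$, $\pi^*$ satisfying $(s_t)^*=(i_t)^*\circ\sigma^*$ and $(i_t)^*\circ\pi^*=\id$ by stacking squares exactly as before. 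The homotopy invariance proved above says $\pi^*$ is an isomorphism, hence so is $(i_t)^*$; for $t\neq 0$ the $s_t$-square has invertible vertical legs, with inverse the $s_{t^{-1}}$-square, so $(s_t)^*$ is an isomorphism and therefore $\sigma^*=((i_t)^*)^{-1}\circ(s_t)^*$ is too. Taking $t=0$ gives $(s_0)^*=(i_0)^*\circ\sigma^*$ an isomorphism. Since $\rho^*\circ\iota^*=(s_0)^*$ is an isomorphism while $\iota^*\circ\rho^*=\id$, the map $\rho^*$ is both split mono and split epi, hence invertible with inverse $\iota^*$.

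The hard part is the cartesian-ness of the scaling square, and specifically that it persists at $t=0$, where $\sigma_C$ degenerates onto the vertex and one might fear the fiber product $C_Y\times_C(C\times\A^1)$ collapses. The resolution --- the one genuine input beyond formal square-chasing --- is that $\sigma_C$ is a morphism over $X$, so the square in question is literally the base change of $\sigma_C$ along $Y\to X$, and base change of any $X$-morphism is cartesian; the $Y$-coordinate of $C_Y=C\times_X Y$, which scaling leaves untouched, supplies exactly the information that would otherwise be lost at $t=0$. Finally, since the argument uses only contravariant functoriality along cartesian squares together with the bivariant homotopy invariance --- both of which hold equally for $\op K^0(-\to-)$ --- the identical chain of isomorphisms proves the statement for operational $K$-theory.
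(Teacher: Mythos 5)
Your proof is correct and follows exactly the route the paper intends: the paper derives this corollary by noting that the bivariant groups are homotopy invariant contravariant functors on the category of morphisms with cartesian squares, and then running the proof of Theorem~\ref{thm.cone} verbatim in that category. Your only addition is to spell out the details the paper leaves implicit --- in particular that the scaling $\sigma_C$ is an $X$-morphism, so all the relevant squares (including the degenerate one at $t=0$) are cartesian, being base changes along $Y \to X$ --- which is precisely the verification needed.
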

\section{Affine toric varieties}
\begin{thm}
  If $X = X(\sigma)$ is an affine toric variety defined by a strongly convex
  rational cone $\sigma$ in a lattice $N$, then for any homotopy invariant
  functor $H$ on $\Sch/k$,  $H(X) = H(T_0)$ where
  $T_0$ is an algebraic torus.
\end{thm}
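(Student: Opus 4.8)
The plan is to exhibit $X(\sigma)$ as a cone over a torus, in the sense of the definition preceding Theorem~\ref{thm.cone}, and then apply that theorem. Write $M = \Hom(N,\ZZ)$ for the dual lattice, let $n = \rk N$, and let $S_\sigma = \sigma^\vee \cap M$ be the semigroup of lattice points of the dual cone, so that $X(\sigma) = \Spec k[S_\sigma]$ has the characters $\chi^m$, $m \in S_\sigma$, as a $k$-basis.

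First I would grade $k[S_\sigma]$. Since $\sigma$ is a rational polyhedral cone, its relative interior contains a lattice point; fix such a point $\ell \in N$. Because $\ell \in \sigma = (\sigma^\vee)^\vee$, the integer $\langle \ell, m\rangle$ is non-negative for every $m \in S_\sigma$, so declaring $\deg \chi^m = \langle \ell, m\rangle$ defines a $\ZZ_{\geq 0}$-grading $k[S_\sigma] = \bigoplus_{d \geq 0} S_d$. As $k[S_\sigma]$ is a finitely generated $k$-algebra it is finitely generated over its degree-zero subring $S_0$, so this grading does present $X(\sigma)$ as a cone over $\Spec S_0$.

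The heart of the argument is to identify $S_0$ as the coordinate ring of a torus. I claim that the degree-zero characters are exactly those indexed by the sublattice $M_0 := M \cap \sigma^\perp$, where $\sigma^\perp = (\operatorname{span}\sigma)^\perp$. The inclusion $M_0 \subseteq \{m : \deg\chi^m = 0\}$ is immediate from $\sigma^\perp \subseteq \sigma^\vee$. Conversely, if $m \in S_\sigma$ with $\langle \ell, m\rangle = 0$, then for every $v \in \operatorname{span}\sigma$ the point $\ell + tv$ lies in $\sigma$ for all sufficiently small $|t|$---this is precisely where $\ell$ being in the relative interior is used---so $\langle \ell + tv, m\rangle = t\langle v, m\rangle \geq 0$ for small $t$ of either sign, forcing $\langle v, m\rangle = 0$ and hence $m \in \sigma^\perp$. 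Thus $S_0 = k[M_0]$ is the group algebra of the free abelian group $M_0$ of rank $n - \dim\sigma$, which is the coordinate ring of a torus $T_0$ of that dimension.

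With $S_0$ so identified, $X(\sigma)$ is a cone over $T_0 = \Spec S_0$, and Theorem~\ref{thm.cone} yields $H(X(\sigma)) = H(T_0)$. The main obstacle is exactly the identification of $S_0$: one must check that the degree-zero part is the full group algebra of $M \cap \sigma^\perp$ rather than merely a sub-semigroup algebra, and this is where the convex geometry of $\sigma$---through the choice of an interior grading vector---enters. The remaining points are routine: finite generation of the grading, and the degenerate cases $\sigma = \{0\}$ (where $\ell = 0$, the grading is trivial, and $T_0 = T_N$) and $\dim\sigma = n$ (where $\sigma^\perp = 0$ and $T_0$ is a point).
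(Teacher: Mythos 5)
Your proof is correct, and it reaches the cone theorem by a more self-contained route than the paper's. The paper first invokes the splitting $X(\sigma) \cong X(\overline{\sigma}) \times T_0$ of \cite[Proposition 3.3.9]{CLS:11}, where $\overline{\sigma}$ is full-dimensional in a complementary lattice, observes that full-dimensionality makes $R = k[X(\overline{\sigma})]$ positively graded with $R_0 = k$, and then grades $S = k[T_0] \otimes_k R$ so that $S_0 = k[T_0]$; you instead grade $k[S_\sigma]$ directly by pairing with a lattice point $\ell$ in the relative interior of $\sigma$ and verify by hand that the degree-zero piece is exactly the full group algebra $k[M \cap \sigma^\perp]$. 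The two arguments are equivalent in substance---an interior grading vector is also what underlies the ``generated in positive degree'' claim the paper extracts from full-dimensionality---but yours bypasses the choice of a splitting of $N$ and the product decomposition entirely, and your two-sided perturbation argument ($\ell + tv \in \sigma$ for small $|t|$ of either sign, forcing $\langle v, m\rangle = 0$) is precisely the point that must be checked to rule out $S_0$ being merely a sub-semigroup algebra; your explicit treatment of the degenerate cases $\sigma = \{0\}$ and $\dim \sigma = n$ is also a nice touch the paper leaves implicit. What the paper's route buys is brevity via the citation and an explicit realization of $T_0$ as a direct factor of $X$; what yours buys is self-containedness and the observation that only the graded algebra structure, not the product structure, is needed to apply Theorem \ref{thm.cone}. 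Both proofs produce the same torus, of dimension $\rk N - \dim \sigma$.
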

\begin{proof}
  Since $X(\sigma)$ is an affine toric variety the proof of
  \cite[Proposition 3.3.9]{CLS:11} shows that we can decompose
  $X = X(\overline{\sigma}) \times T_0$ where $T_0$ is a torus
  and $\overline{\sigma}$ is a full dimensional cone. Since
  $\overline{\sigma}$ is full-dimensional the semi-group
  $S_{\overline{\sigma}}$ is generated in positive degree, so
  $R=k[X(\overline{\sigma})]$ is a positively graded ring with
  $R_0 = k$.

  Hence $S = k[X(\sigma)] = k[T_0] \otimes_k R$ is a positively graded ring
  with $S_0 = k[T_0]$. Hence by the cone theorem,
  $H(X) = H(T_0)$
\end{proof}

\begin{cor} \label{cor.toric}
  If $X$ is an affine toric variety then $A^0(X) = \ZZ$ and
  $A^k_{\op}(X) = 0$ for $k > 0$.
Likewise, $\op K^0(X) = \ZZ$.
\end{cor}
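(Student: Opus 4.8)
The plan is to reduce to the case of a torus and then evaluate the two functors there directly. Both $A^*_{\op}$ and $\op K^0$ were shown above to be homotopy invariant functors on $\Sch/k$, so the preceding theorem supplies isomorphisms $A^*_{\op}(X) \cong A^*_{\op}(T_0)$ and $\op K^0(X) \cong \op K^0(T_0)$, induced by pullback along the projection $X \to T_0$, where $T_0 \cong \GG_m^r$ is a split torus. Since $T_0$ is smooth I would then invoke the comparison isomorphisms recorded earlier: Poincar\'e duality identifies $A^k_{\op}(T_0)$ with the ordinary Chow group $A^k(T_0) = A_{r-k}(T_0)$ (\cite[Corollary 17.4]{Ful:84}), and the canonical map $\op K^0(T_0) \to K_0(T_0)$ is an isomorphism (\cite[Corollary 4.5]{AnPa:15}). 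As the first reduction respects the grading, the whole statement is thereby reduced to the two computations $A_*(\GG_m^r) = \ZZ$, concentrated in top dimension $r$, and $K_0(\GG_m^r) = \ZZ$.

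For the Chow groups I would induct on $r$, writing $\GG_m^r = \GG_m^{r-1} \times \GG_m$ and viewing $\GG_m^{r-1} \times \GG_m$ as the open complement of $\GG_m^{r-1} \times \{0\} \cong \GG_m^{r-1}$ inside $\GG_m^{r-1} \times \A^1$. The right-exact localization sequence, combined with the homotopy-invariance isomorphism $A_*(\GG_m^{r-1} \times \A^1) \cong A_{*-1}(\GG_m^{r-1})$, forces every Chow group of $\GG_m^r$ to vanish apart from the fundamental class in dimension $r$; the base case $A_*(\GG_m) = \ZZ$ in dimension $1$ is the localization sequence for $\{0\} \hookrightarrow \A^1$. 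For $K_0$ I would instead induct using Bass's fundamental theorem, $K_0(R[t,t^{-1}]) \cong K_0(R) \oplus K_{-1}(R)$: since $\GG_m^r$ is regular its negative $K$-groups vanish, so $K_0(\GG_m^r) \cong K_0(\GG_m^{r-1}) \cong \cdots \cong K_0(\Spec k) = \ZZ$.

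The first reductions are purely formal, so the only genuine work is the pair of torus computations, and this is where I expect the (mild) difficulty to lie. On the Chow side one must track the dimension shift in the homotopy-invariance isomorphism carefully through the localization sequence; on the $K$-theory side one should resist trying to deduce $K_0(T_0) = \ZZ$ from $\operatorname{Pic}(T_0) = 0$, since the splitting $K_0 \cong \ZZ \oplus \operatorname{Pic}$ is special to dimension one---hence the appeal to Bass's theorem. Assembling the pieces gives $A^0(X) = \ZZ$, $A^k_{\op}(X) = 0$ for $k > 0$, and $\op K^0(X) = \ZZ$; the value $A^0(X) = \ZZ$ is moreover consistent with $X$ being irreducible.
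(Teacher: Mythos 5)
Your proof is correct and takes essentially the same route as the paper: reduce to the torus $T_0$ via the preceding theorem, identify $A^*_{\op}(T_0)$ with $A^*(T_0)$ and $\op K^0(T_0)$ with $K_0(T_0)$ by smoothness, and compute both on the torus. The only difference is the level of detail in that final step---where you run a localization induction and invoke Bass's fundamental theorem, the paper simply observes that $T_0$ is open in $\A^n$ (so $A^k(T_0)$ is a quotient of $A^k(\A^n)=0$ for $k>0$) and asserts $K_0(T_0)=\ZZ$.
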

\begin{proof}
  By the theorem we know that $A^*_{\op}(X) = A^*_{\op}(T_0)$
  and $\op K^0(X) = \op K^0(T_0)$. Since a torus is an open
  subset of $\A^n$, $A^k_{\op}(T_0) = A^k(T_0) = 0$ if $k > 0$
  and $A^0(X) = \ZZ$ for any $X$. Likewise, $\op K^0(X) =
  \op K^0(T_0) =K_0(T_0) = \ZZ$.
\end{proof}
\begin{exa} When $X$ is a complete toric variety
  then we know that $A^k_{\op}(X) = \Hom(A_k(X), \ZZ)$. However, for
  non-complete toric variety this result fails. For example
  if $X = \A^1$ then $A_0(X) = 0$ but $A^0(X) = \ZZ$. Another example
is to let $\sigma$ denote the cone generated by
  $\{(1,0,1),(0,-1,1),(-1,0,1),(0,1,1)\}$ in $\RR^3$.
  One can compute that $A_2(X(\sigma)) = \ZZ/2 \oplus \ZZ$. Thus,
 $\Hom(A_2(X(\sigma)), \ZZ) = \ZZ$ but by
Corollary \ref{cor.toric}, $A^2_{\op}(X(\sigma)) = 0$.
\end{exa}
\begin{exa}
In the equivariant case we have an analogous result for the $T$-equivariant
operational Chow ring and $K$-theory. To simplify the notation
we assume that the cone $\sigma$ is full dimensional.
\begin{cor}
  If $X$ is an affine toric variety associated to a full dimensional cone $\sigma$
  then $A^*_{\op, T}(X) = \Sym(X(T))$ and $\op K^0_T(X) = R(T)$.
Here $\Sym(X(T))$ is the polynomial algebra generated by the character group
    of $T$ and $R(T)$ is the representation ring of $T$.
\end{cor}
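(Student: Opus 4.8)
The plan is to realize $X$ as a $T$-cone over a point and then invoke the equivariant form of Theorem \ref{thm.cone}. Write $M = X(T)$ for the character lattice and $N = \Hom(M,\ZZ)$ for the dual lattice containing $\sigma$, so that $X = \Spec k[S_\sigma]$ with $S_\sigma = \sigma^\vee \cap M$, and the coordinate ring splits into $T$-eigenspaces as $k[X] = \bigoplus_{m \in S_\sigma} k\,\chi^m$. Since $\sigma$ is full dimensional, $\sigma^\vee$ is a pointed cone, so the interior of $\sigma$ meets $N$; I would choose $\ell \in \operatorname{int}(\sigma) \cap N$, which is exactly a linear functional on $M$ that is strictly positive on $S_\sigma \setminus \{0\}$. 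Setting $R_i = \bigoplus_{\langle \ell, m\rangle = i} k\,\chi^m$ then gives $k[X]$ a nonnegative grading with $R_0 = k\cdot\chi^0 = k$, in which every graded piece is a sum of $T$-eigenspaces and is therefore $T$-stable.

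Next I would verify that this grading presents $X$ as a $T$-cone over $\pt = \Spec k$ in the sense of the definition preceding Theorem \ref{thm.cone}. The grading is precisely the one induced by the one-parameter subgroup $\lambda_\ell \colon \GG_m \to T$ attached to $\ell$, and the scaling map $s_t \colon X \to X$ from the proof of the cone theorem is the action of $\lambda_\ell(t)$. The decisive point, and the one I expect to be the only real obstacle, is the $T$-equivariance of the resulting $\A^1$-action: because $T$ is abelian, the subtorus $\lambda_\ell(\GG_m)$ is central, so $s_t$ commutes with the whole $T$-action, while $T$ acts trivially on the base $R_0 = k$. Thus $C = X$ is genuinely a $T$-cone over a point carrying the trivial $T$-action, and the equivariant cone theorem applies.

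Applying the equivariant statement of Theorem \ref{thm.cone} to the $T$-homotopy invariant functors $A^*_{\op,T}$ and $\op K^0_T$ (shown to be $T$-homotopy invariant in Section 2) then yields isomorphisms $A^*_{\op,T}(X) \cong A^*_{\op,T}(\pt)$ and $\op K^0_T(X) \cong \op K^0_T(\pt)$. It only remains to identify the two invariants of a point. Since $\pt$ is smooth, equivariant Poincaré duality identifies $A^*_{\op,T}(\pt)$ with the equivariant Chow ring $A^*_T(\pt)$, which by the Edidin--Graham computation is the symmetric algebra $\Sym(X(T))$ on the character group; and by the Anderson--Payne result quoted in Section 2, smoothness of $\pt$ gives $\op K^0_T(\pt) = K_0^T(\pt) = R(T)$. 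Combining these identifications with the cone isomorphisms yields $A^*_{\op,T}(X) = \Sym(X(T))$ and $\op K^0_T(X) = R(T)$, as claimed. Everything past the equivariance check is a direct invocation of the cone theorem together with the standard invariants of a point.
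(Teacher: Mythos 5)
Your proof is correct and follows essentially the same route as the paper: realize $X$ as a $T$-cone over $\Spec k$ (the paper gets the positive grading with $S_0 = k$ from full-dimensionality of $\sigma$, exactly the grading you construct explicitly via an interior lattice point $\ell$), apply the equivariant form of Theorem \ref{thm.cone}, and identify $A^*_{\op,T}(\pt) = A^*_T(\pt) = \Sym(X(T))$ and $\op K^0_T(\pt) = K_0^T(\pt) = R(T)$. Your only additions are the explicit verification that the grading pieces are $T$-stable and that the scaling action commutes with $T$, details the paper leaves implicit.
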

\begin{proof}
  In this case $S_0 = \Spec k$, so $A^*_{\op, T}(X) = A^*_T(\pt) =\Sym(X(T))$
  by \cite[Section 3.2]{EdGr:98} and $\op K_T^0(X) =K_T^0(\pt) =R(T)$.
\end{proof}
\end{exa}

\subsection{An alternative proof of the vanishing of Chow cohomology and operational $K$-theory on affine toric varieties}

In \cite{Ric:19, EdRi:19} a more involved proof that $A^*_{\op}(X) = \op K^0(X) = \ZZ$
when $X$ is an affine toric variety is given. The proof of 
both of these statements
rests on the fact that both $A^*_{\op}(X)$ and $\op K^0(X)$ satisfy the
following descent property for proper surjective morphisms.

\begin{quote}If $X' \to X$ is a proper surjective morphism and if $H$ denotes either functor
  $A^*_{\op}$ or $\op K^0$ then the sequence
  $$0 \to H(X)\otimes \QQ \to H(X')\otimes \QQ \stackrel{p_1^* - p_2^*} \to H(X' \times_X X') \otimes \QQ$$ is
  exact where $p_1, p_2$ are the two projections $X' \times_X X' \to X'$.
\end{quote}

This descent property does not hold for arbitrary homotopy
invariant functors -- for example it need not hold for the functor $\op K^0_{G}$ when
$G$ is not a torus. However when the descent property holds for a functor
$H$, it can be used
as a tool to calculate $H$ on singular schemes \cite{Kim:92, AnPa:15}.

  \section{Operational Chow rings of good moduli spaces}
  The goal of this section is to explain how the cone theorem for homotopy
  invariant functors can shed light on questions about the structure
  of the operational Chow ring for quotients of smooth varieties and, more generally, good moduli spaces of smooth Artin stacks.

\subsection{Strong cycles on good moduli spaces of Artin stacks}
Let $G$ be a linearly algebraic group acting on a scheme $X$. We say
that a scheme $Y$ equipped with a  $G$-invariant morphism $p \colon X \to Y$
is a {\em good quotient} if $p$ is affine and $(p_*{\mathcal O}_X)^G = {\mathcal O}_Y$. The basic example is
the quotient $X^{ss} \to X^{ss}/G$ where $X^{ss}$ is the set of
semi-stable points (with respect to a choice of linearization) for the action of a linearly reductive group on a projective
variety $X$. This definition was extended to Artin stacks by Alper.
  \begin{defn}[{\cite[Definition 4.1]{Alp:13}}]
Let $\X$ be an Artin stack and let $X$ be an algebraic space. We say
that $X$ is a {\em good moduli space of $\X$} if there is a morphism
$\pi \colon \X \to X$ such that
\begin{enumerate}
\item $\pi$ is {\em cohomologically affine} meaning that the pushforward functor $\pi_*$
on the category of quasi-coherent ${\mathcal O}_\X$-modules is exact.

\item $\pi$ is {\em Stein} meaning that the natural map ${\mathcal O}_X \to \pi_* {\mathcal O}_\X$ is an isomorphism.
\end{enumerate}
\end{defn}

\begin{rem}
If $\X = [Z/G]$ where $G$ is a linearly reductive algebraic group
then the statement that $X$ is a good moduli space for $\X$ is 
equivalent to the
statement that $X$ is the good quotient of $Z$ by $G$. 
\end{rem}

\begin{defn}[{\cite{EdRy:17}}] \label{def.stablegms}
Let $\X$ be an Artin stack with good moduli space $X$ and
let $\pi \colon \X \to X$ be the good moduli space morphism. We say that a closed point $x$
of  $\X$ is
{\em
   stable} if $\pi^{-1}(\pi(x)) = x$ under the induced map of
  topological spaces $|\X| \to |X|$. A closed point $x$ of $\X$ is {\em
    properly stable} if it is stable and the stabilizer of $x$ is finite.

We say $\X$ is  stable (resp.~properly stable) if there is a good moduli
space $\pi \colon \X\to X$ and the set of stable (resp.~properly stable) points is non-empty. Likewise we say that $\pi$ is a stable (resp.~properly stable) good moduli space morphism.
\end{defn}
\begin{rem} \label{remark:ps}
Again this definition is modeled on GIT. If $G$ is a linearly reductive group
and $X^{ss}$ is the set of semistable points for a linearization of the 
action of $G$ on a projective variety $X$ then a (properly) stable point
of $[X^{ss}/G]$ corresponds to a (properly) stable orbit in the sense of GIT. 
The stack $[X^{ss}/G]$ is stable if and only if $X^{s} \neq \emptyset$. Likewise
$[X^{ss}/G]$ is properly stable if and only if $X^{ps} \neq \emptyset$. As is the case for GIT quotients, the set of stable (resp. properly stable points)
is open \cite{EdRy:17}.
\end{rem}

\begin{defn} \cite{EdSa:18, EdSa:17}
\label{def:strong}
Let $\X$ be an irreducible Artin stack with stable good moduli space $\pi \colon \X \to X$. A
closed integral substack $\Z \subseteq \X$ is {\em strong} if $\codim_\X\Z
=\codim_X\pi(\Z)$ and $\Z$ is saturated with respect to $\pi$, i.e.~$\pi^{-1}(\pi(\Z)) = \Z$ as stacks. We say $\Z$ is {\em topologically strong} if
$\codim_\X\Z=\codim_X\pi(\Z)$ and $\pi^{-1}(\pi(\Z))_{red} = \Z$.
\end{defn}

Let $A^*_{\tst}(\X/X)$ be the subgroup of $A^*(\X)$ generated by topologically strong cycles and $A^*_{\st}(\X/X)$ be the subgroup generated by by strong cycles.
(Here the Chow group $A^*(\X)$ is the Chow group defined by Kresch \cite{Kre:99}.
When $\X = [Z/G]$ it can be identified with the equivariant Chow group
$A^*_G(Z)$ of \cite{EdGr:98}.)

The main result of \cite{EdSa:17} is the following theorem which says that if
$X$ is smooth then 
any operational class $c \in A^k(X)$ can be represented by a codimension
$k$-cycle on the stack $\X$ which is saturated with respect to the good moduli space morphism $\X \to X$. 

\begin{thm} \cite[Theorem 1.1]{EdSa:17}
  Let $\X$ be a properly stable smooth Artin stack with good moduli space
  $\X \stackrel{\pi} \to X$. Then there is a pullback $\pi\colon A^*_{\op}(X)_{\QQ} \to A^*(\X)_{\QQ}$ which is injective and
  factors through the subgroup $A^*_{\tst}(\X)_\QQ$. 
\end{thm}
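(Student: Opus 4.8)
The plan is to build the pullback tautologically from the operational structure, then verify the two asserted properties by reducing, via the \'etale-local model $[V/G]\to V/G$, to the cone theorem. To construct $\pi^*$: the good moduli space morphism $\pi\colon\X\to X$ is an object over $X$, so any $c\in A^k_{\op}(X)$ supplies an operator $c_\pi\colon A_*(\X)\to A_{*-k}(\X)$ on Kresch's Chow groups \cite{Kre:99}. Since $\X$ is smooth and $\dim\X=\dim X=:n$ (the generic fibres of $\pi$ being finite by proper stability), I would set $\pi^*c:=c_\pi([\X])\in A_{n-k}(\X)=A^k(\X)$. This is a ring homomorphism because the operational product is composition of operators, while on the smooth stack $\X$ the intersection product agrees with the composition product under Poincar\'e duality, the stacky analogue of \cite[Corollary 17.4]{Ful:84}; one then tensors with $\QQ$.

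For the factoring statement I would work \'etale-locally. By \cite{AHR:15} the morphism $\pi$ is \'etale-locally modelled on $[V/G]\to V/G$ with $G$ linearly reductive and $V$ a representation; because $V/G=\Spec (\Sym V^\vee)^G$ is a cone over $\pt$, Theorem \ref{thm.cone} gives $A^k_{\op}(V/G)=0$ for $k>0$. Feeding this local vanishing into a Kimura-type presentation \cite{Kim:92} of the cycle $c_\pi([\X])$ forces its integral components $\Z$ to be saturated, i.e.\ $\pi^{-1}(\pi(\Z))_{red}=\Z$. It then remains to check that each such component has the correct codimension, $\codim_\X\Z=\codim_X\pi(\Z)$; this is exactly where proper stability enters, since the finiteness of the generic fibres guarantees that saturation preserves codimension, so that the components are genuinely topologically strong and $\pi^*c\in A^*_{\tst}(\X)_\QQ$.

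For injectivity I would restrict to the properly stable locus $\X^{ps}\subseteq\X$, which is nonempty, open and saturated, with dense open image $X^{ps}=\pi(\X^{ps})$. There $\pi^{ps}\colon\X^{ps}\to X^{ps}$ is a proper quasi-finite morphism exhibiting $X^{ps}$ as the coarse space of the Deligne--Mumford stack $\X^{ps}$, so by Vistoli's theorem the pullback on rational Chow groups is an isomorphism and $\pi^{ps}$ has a positive generic degree. The projection formula then detects any $c$ with $c_\pi([\X])=0$ after capping, and to upgrade this from the fundamental class to an arbitrary test morphism $g\colon X'\to X$ I would base change, using that good moduli space morphisms are stable under base change \cite{Alp:13} so that $\X\times_X X'\to X'$ is again a good moduli space over which the same detection applies.

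The step I expect to be the main obstacle is the factoring statement. Passing from the \'etale-local vanishing furnished by the cone theorem to a global statement about the representing cycles of $c_\pi([\X])$ requires care with the descent behaviour of operational classes and with Kresch's Chow theory, and controlling the codimension of the saturation $\pi^{-1}(\pi(\Z))$ --- not merely its support --- is precisely the point at which the properly stable hypothesis is indispensable.
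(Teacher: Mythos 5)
Two remarks before the substance: this paper does not actually prove the statement --- it is imported verbatim from \cite{EdSa:17}, and the proof there runs through the canonical reduction of stabilizers of \cite{EdRy:17}, not through the cone theorem --- so your proposal must be judged as an independent argument, and as such it has two genuine gaps. Your construction of $\pi^*$ is essentially the one used in \cite{EdSa:17} (granting a point you elide: a class in $A^*_{\op}(X)$ a priori operates only on Chow groups of schemes over $X$, so extending $c_\pi$ to Kresch's $A_*(\X)$ requires the identification of $A_*([Z/G])$ with equivariant Chow groups via approximation spaces --- fixable, but not automatic). The first real gap is the factoring step. The vanishing $A^k_{\op}(V/G)=0$ for $k>0$ (your correct application of Theorem \ref{thm.cone}, cf.\ Corollary \ref{cor.invring}) is a statement about operational classes on the \'etale-local models; it imposes no condition whatsoever on the components of a chosen cycle representing $c_\pi([\X])$, since a Chow class has many representatives and local triviality of the class does not select one. ``Feeding this into a Kimura-type presentation forces the components to be saturated'' names no mechanism; producing a representative with topologically strong components is precisely the content of the theorem, and in \cite{EdSa:17} it is accomplished by descending strong cycles along the tower of saturated (Reichstein) blowups supplied by \cite{EdRy:17}. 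Your codimension claim is also unjustified: finiteness of the \emph{generic} fibres controls only components whose image meets the properly stable locus, while for a saturated integral substack $\Z$ lying over the boundary --- where fibres are positive-dimensional and stabilizers have positive dimension --- the equality $\codim_\X\Z=\codim_X\pi(\Z)$ is a genuine extra condition. That is exactly why ``topologically strong'' is a definition rather than a consequence of proper stability.

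The second gap is injectivity. Restriction to the open stable locus loses information: $A^*_{\op}(X)_\QQ\to A^*_{\op}(X^{ps})_\QQ$ is not injective in general (already $A^1_{\op}(\PP^1)=\ZZ\to A^1_{\op}(\A^1)=0$ kills the hyperplane class), so a nonzero $c$ vanishing over $X^{ps}$ escapes your detection entirely; the Vistoli isomorphism and the projection formula on $\X^{ps}\to X^{ps}$ see only $c|_{X^{ps}}$, and note that a degree-preserving pushforward $\pi_*$ exists only over the stable locus, since elsewhere the good moduli morphism shifts dimensions. The proposed repair by base change is circular: from $c_\pi([\X])=0$ one cannot deduce $c_{\pi'}([\X\times_X X'])=0$, because the projection $\X\times_X X'\to\X$ is in general neither flat nor proper and $[\X\times_X X']$ is not a pullback of $[\X]$; moreover, while the base change is again a good moduli space by \cite{Alp:13}, it need not be smooth or properly stable, so the Vistoli step is unavailable there. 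What closes this gap in \cite{EdSa:17} is the existence, via \cite{EdRy:17}, of a proper morphism $\X'\to\X$ with $\X'$ smooth with finite stabilizers, inducing a proper birational morphism $X'\to X$ of good moduli spaces; injectivity then follows from the rational injectivity of operational pullback along proper surjective morphisms in the style of Kimura \cite{Kim:92}, combined with the Deligne--Mumford case. As written, your argument proves injectivity only after restriction to the stable locus, which is strictly weaker than the theorem.
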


\cite[Example 3.24]{EdSa:17} shows that not every topologically
strong cycle is in the image of $A^*_{\op}(X)$. However, \cite[Theorem 1.7c]{EdSa:17} states that any strong lci cycle on $\X$ is
in the image of $A^*_{\op}(X)_{\QQ}$. (A cycle $\sum_i a_i [\Z_i]$
is lci if the $\Z_i$ are closed substacks of $\X$ such that the inclusion
$\Z_i \hookrightarrow \X$ is an lci morphism.)
This leads to a number of successively weaker questions about
the operational Chow rings of good moduli spaces of smooth Artin stacks.
They can be viewed as analogues for quotients of smooth varieties by reductive
groups
of Conjectures 2 and 3 of \cite{MaVe:16}.

\begin{question} \label{ques.one}
   Is the image of $A^*_{\op}(X)_\QQ$ contained in the subgroup
    of $A^*(\X)_\QQ$ generated by strong lci cycles?
\end{question}
\begin{question} \label{ques.two}
Is the image of $A^*_{\op}(X)_\QQ$ equal to the subring of $A^*(\X)_\QQ$
generated by strong lci cycles?
\end{question}
\begin{question} \label{ques.three}
Is $A^*_{\op}(X)_\QQ$ generated by Chern classes of perfect complexes
    on $X$?
\end{question}

\begin{rem}
  Note that Question \ref{ques.three}  is an analogue of the question raised
  by Anderson and Payne about the surjectivity of the map $K^0(X) \to
  \op K^0(X)$ where $K^0(X)$ is the Grothendieck group of perfect complexes.
  Anderson and Payne prove that for 3-dimensional complete toric varieties this map is in fact surjective. By comparison in \cite{EdSa:17} the authors prove that
  $A^*(X)_\QQ = A^*_{\st}(\X/X)_\QQ$ and in the case of 3-dimensional toric varieties \cite{Ric:19} shows that $A^*(X)_\QQ$ is generated by strong lci cycles.
  \end{rem}
  Given the relation between the operational Chow ring and strong cycles leads to the following additional question.
  \begin{question}
    Is $A^*_{\st}(\X/X)$ (resp. $A^*_{tst}(\X/X)$) a subring of $A^*(\X)$; i.e.,
    is the product of strong (resp. topologically strong) cycles strong?
  \end{question}

  \subsection{The cone theorem and local models for good moduli}
  The \'etale slice theorem of Alper, Hall and Rydh \cite{AHR:15} states if $\X$
  is a smooth stack
  then  at a closed point $x$ of $\X$ the  good moduli morphism $\X \to X$ is \'etale locally isomorphic to the quotient $[V/G_x] \to \Spec k[V]^{G_x}$
  where $G_x$ is the inertia group of $x$ in $\X$ and $V$ is a representation
  of $G_x$. Thus we may view stacks of the form $[V/G]$ with their
  good moduli spaces $\Spec k[V]^G$ as the ``affine models''
  of smooth stacks with good moduli spaces.

  The cone theorem has the following corollary.
\begin{cor} \label{cor.invring}
  Let $V$ be a representation of a reductive group $G$
  and let $X = \Spec k[V]^G$ be the quotient. Then for any homotopy invariant
  functor $H(X) = H(\Spec k)$.
  In particular, $A^*_{\op}(X) = \ZZ$ and $\op K^0(X) = \ZZ$.
  \end{cor}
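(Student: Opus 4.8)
The plan is to exhibit $X = \Spec k[V]^G$ as a cone over $\Spec k$ and then apply the cone theorem (Theorem \ref{thm.cone}) verbatim. The one genuine idea needed is that the linear structure of the $G$-action equips the invariant ring with a grading whose degree-zero piece is just the ground field.

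First I would put the standard grading on the coordinate ring $k[V] = \Sym(V^*) = \bigoplus_{d \geq 0} \Sym^d(V^*)$ by polynomial degree. Because $G$ acts linearly on $V$, it acts on the space of linear forms $V^* = \Sym^1(V^*)$, and this action extends to a grading-preserving action on all of $k[V]$. Passing to invariants therefore respects the grading, so that $k[V]^G = \bigoplus_{d \geq 0} \big(\Sym^d(V^*)\big)^G$. In degree zero this reads $\big(\Sym^0(V^*)\big)^G = k^G = k$, since $G$ acts trivially on scalars.

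Next I would invoke finite generation: for a reductive group $G$ acting on a finite-dimensional representation $V$, the classical finiteness theorem for invariants guarantees that $S := k[V]^G$ is a finitely generated $k$-algebra. Combined with the previous paragraph, this shows that $S = \bigoplus_{d \geq 0} S_d$ is a finitely generated graded $k$-algebra with $S_0 = k$, which is precisely the data of a cone over the base $\Spec k$ in the sense of the cone definition. I would emphasize that no generation-in-degree-one hypothesis is required, which is exactly why that hypothesis was dropped from the definition of a cone in this paper.

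With $X$ displayed as a cone, Theorem \ref{thm.cone} gives $H(X) = H(\Spec k)$ for any homotopy invariant functor $H$, and it only remains to evaluate the two named functors on a point. Since $\Spec k$ is smooth, Poincar\'e duality identifies $A^*_{\op}(\Spec k)$ with $A^*(\Spec k) = \ZZ$ concentrated in degree zero, and \cite[Corollary 4.5]{AnPa:15} identifies $\op K^0(\Spec k)$ with $K_0(\Spec k) = \ZZ$. I do not expect any real obstacle here: the only substantive external input is the finiteness theorem, and the heart of the matter is simply the grading observation that makes the cone theorem applicable.
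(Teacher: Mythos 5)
Your proposal is correct and follows essentially the same route as the paper: the linear $G$-action preserves the standard grading on $k[V]$, so $k[V]^G$ is a graded ring with degree-zero piece $k$, and Theorem \ref{thm.cone} applies. Your explicit appeal to the finiteness theorem for invariants of reductive groups is a detail the paper leaves implicit (finite generation is required by the definition of a cone), and including it is a welcome bit of extra care rather than a deviation.
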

\begin{proof} 
  Since $G$ acts linearly on $V$ the action of $G$ on $k[V]$
  preserves the natural grading. Hence the invariant ring $k[V]^G$ is
  also graded. Thus by Theorem \ref{thm.cone} $H(X) = H(\Spec k)$
  since $k$ is the 0-th graded piece of $k[V]^G$.
\end{proof}

Combining Corollary \ref{cor.invring} with \cite[Theorem 1.7c]{EdSa:17} yields the following result.
\begin{cor}
  If $\Z \subset \X = [V/G]$ is a proper closed substack which is strongly regularly embedded then $[\Z]$ is torsion in $A^*(\X) = A^*_G(\pt)$. In particular
  if $A^*_G(\pt)$ is torsion free (for example if $G$ is torus or $\GL_n$)
  then $[\Z] = 0$.
\end{cor}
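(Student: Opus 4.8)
The plan is to force $[\Z]$ into the image of the pullback from the good moduli space and then to use the cone theorem to see that this image is concentrated in degree zero. First I would record the identification $A^*(\X) = A^*_G(V) = A^*_G(\pt)$, where the first equality is the comparison between the Chow group of the quotient stack $[V/G]$ and the equivariant Chow group of \cite{EdGr:98}, and the second is homotopy invariance of equivariant Chow groups applied to the representation $V$. In particular $A^*(\X)$ is a graded ring and, writing $k := \codim_\X \Z$, the class $[\Z]$ is homogeneous of degree $k$.

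Next I would unwind the hypothesis that $\Z$ is strongly regularly embedded: this means that $\Z$ is a strong substack in the sense of Definition \ref{def:strong} (so $\codim_\X \Z = \codim_X \pi(\Z)$ and $\pi^{-1}(\pi(\Z)) = \Z$) and that the closed immersion $\Z \hookrightarrow \X$ is a regular embedding, hence an lci morphism; thus $[\Z]$ is a strong lci cycle. By \cite[Theorem 1.7c]{EdSa:17}, every strong lci cycle lies in the image of the rational pullback $\pi^* \colon A^*_{\op}(X)_\QQ \to A^*(\X)_\QQ$. Because $\pi^*$ is a homomorphism of graded rings and $[\Z]$ is homogeneous of degree $k$, I may write $[\Z] = \pi^*(c)$ with $c \in A^k_{\op}(X)_\QQ$.

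Now the cone theorem enters through Corollary \ref{cor.invring}: the good moduli space $X = \Spec k[V]^G$ is the spectrum of a graded invariant ring whose degree-zero part is $k$, so $A^*_{\op}(X) = \ZZ$ is concentrated in degree $0$ and $A^k_{\op}(X)_\QQ = 0$ for every $k \geq 1$. Since $\Z$ is a proper closed substack of the irreducible stack $\X$, its codimension satisfies $k \geq 1$, so $c = 0$ and therefore $[\Z] = 0$ in $A^k(\X)_\QQ = A^k_G(\pt) \otimes \QQ$. Vanishing after $\otimes \QQ$ is precisely the assertion that $[\Z]$ is torsion in $A^k_G(\pt)$. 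Finally, if $A^*_G(\pt)$ is torsion free then this torsion class must vanish, giving $[\Z] = 0$; this applies when $G = T$ or $G = \GL_n$, since then $A^*_G(\pt)$ is the polynomial ring $\Sym(X(T))$, respectively $\ZZ[c_1, \dots, c_n]$, which is free over $\ZZ$.

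The step that genuinely requires care is the invocation of \cite[Theorem 1.7c]{EdSa:17}, the more so because the very notion of a strong cycle presupposes (Definition \ref{def:strong}) that $\X$ carries a \emph{stable} good moduli space. The hard part will therefore be to confirm that $\X = [V/G]$ satisfies the standing smoothness and (proper) stability hypotheses of that theorem, since for a general representation the stable locus can be empty; I would either restrict to the properly stable situation or check that the construction of an operational class from a strong lci cycle is local around $\Z$, where the strong condition already supplies the codimension matching that the argument needs. Once that is in place, the remaining bookkeeping — gradedness of $\pi^*$, positivity of $\codim_\X \Z$, and the torsion-free computation for $T$ and $\GL_n$ — is routine.
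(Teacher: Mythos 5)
Your proof is exactly the paper's argument: the paper disposes of this corollary in one line, ``combining Corollary \ref{cor.invring} with \cite[Theorem 1.7c]{EdSa:17}'', which is precisely your chain --- strong lci implies $[\Z] = \pi^*(c)$ rationally, $A^k_{\op}(X)_\QQ = 0$ for $k = \codim_\X \Z \geq 1$ by the cone theorem applied to the graded ring $k[V]^G$, hence $[\Z]$ is torsion, and vanishes outright when $A^*_G(\pt)$ is torsion free (as for $T$ and $\GL_n$). Your closing concern about the smoothness and proper stability hypotheses of \cite[Theorem 1.7c]{EdSa:17} is legitimate, but the paper invokes that theorem without comment on this point either, so you have if anything been more careful than the source.
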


Note that any integral strong divisor ${\mathcal D}$ on $\X = [V/G]$ is necessarily defined by a single
$G$-invariant equation. In this case ${\mathcal O}({\mathcal D})$ is
an equivariantly trivial line bundle so $[{\mathcal D}] = c_1({\mathcal O}({\mathcal D})) = 0$ in the equivariant Chow ring $A^*_G(V)$. More generally,
if an integral substack $\Z \subset [V/G]$ is a complete intersection of strong divisors, then its class in $A^*_G(V)$ is also 0. In particular it shows that
in the ``local case'' the image of $A^*_{\op}(X)$ is contained in the
subgroup generated by strong global complete intersections.

This leads to the following question.
\begin{question}
  Are there examples of strong integral substacks $\Z \subset [V/G]$
  such that $[\Z] \neq 0$ in $A^*([V/G])_\QQ$?
  \end{question}

\subsection{Acknowledgments} The authors are grateful to Sam Payne and Angelo Vistoli for helpful comments.

\bibliographystyle{amsmath}
\def\cprime{$'$} \def\cprime{$'$} \def\cprime{$'$}

\end{document}